\newtheorem{thm}{Theorem}[section]
\newtheorem*{thm*}{Theorem}
\newtheorem{conj*}{Conjecture}
\newtheorem{lemma}[thm]{Lemma}
\newtheorem*{lemma*}{Lemma}
\newtheorem{prop}[thm]{Proposition}
\newtheorem*{prop*}{Proposition}
\newtheorem{cor}[thm]{Corollary}
\theoremstyle{definition}
\newtheorem{df}[thm]{Definition}
\newtheorem*{ex}{Example}
\theoremstyle{remark}
\newtheorem*{rmk}{Remark}
\newtheorem*{claim*}{Claim}
\newcommand{\bbQ}{\mathbb{Q}}
\newcommand{\bbF}{\mathbb{F}}
\newcommand{\bbC}{\mathbb{C}}
\newcommand{\bbZ}{\mathbb{Z}}
\newcommand{\bbP}{\mathbb{P}}
\newcommand{\bbR}{\mathbb{R}}
\newcommand{\QQ}{\bbQ}
\newcommand{\RR}{\bbR}
\newcommand{\CC}{\bbC}
\newcommand{\FF}{\bbF}
\newcommand{\ZZ}{\bbZ}
\newcommand{\PP}{\bbP}
\newcommand{\cO}{\mathcal{O}}
\newcommand{\cH}{\mathcal{H}}
\newcommand{\fp}{\mathfrak p}
\DeclareMathOperator{\rk}{rk}
\DeclareMathOperator{\End}{End}
\DeclareMathOperator{\SL}{SL}
\DeclareMathOperator{\PSL}{PSL}
\def\det{\operatorname{det}}
\DeclareMathOperator*{\mysum}{\sum}
\newcommand{\emphh}[2][ ]{%
\ifthenelse{\equal{#1}{ }}{\index{default}{#2} {\emph{#2}}}{\index{default}{#1@#2} {\emph{#2}}}%
}
\def\sumprime{\mathop{\sum{\raise3pt\hbox{${}'$}}}} 
\newcommand{\tto}[1]{%
\ifthenelse{\equal{#1}{}}{\to}{\stackrel{#1}{\to}}}
\newcommand{\fixme}[1]{}
\DeclareMathOperator{\Fr}{Fr}
\begin{document}

\title[The non-ordinary locus of the TTV curves]{The non-ordinary locus of the TTV family of curves}

\author{Luiz Kazuo Takei}

\address{Department of Mathematics, John Abbott College, Sainte-Anne-de-Bellevue, QC, Canada}

\email{luiz.kazuotakei@johnabbott.qc.ca}


\maketitle

\vspace{\baselineskip}

\begin{abstract}
This article establishes a relation between the non-ordinary locus of the reduction mod $p$ of the TTV family of curves and the genus of certain triangular modular curves.
\end{abstract}

\section{Introduction}

Let $\Gamma_0(p) = \left\{ M \in \SL_2(\ZZ) \ \left| \ M \equiv \left( \begin{smallmatrix} * & * \\ 0 & * \end{smallmatrix} \right) \pmod{\fp} \right. \right\} \subseteq \SL_2(\ZZ)$ and $\cH^* = \{ z \in \CC \ | \ \Im(z) > 0 \} \cup \QQ$. It is well-known that the modular curve $X_0(p) = \Gamma_0(p) \backslash \cH^*$ admits an integral model for which the reduction modulo $p$ consists of two copies of $X_0(1)_{\FF_p} = \PP^1_{\FF_p}$ crossing transversally at the points that represent supersingular elliptic curves.

This article's motivating question is whether one can find a similar relation when the classical modular group $\SL_2(\ZZ)$ is replaced by the triangle group $\Gamma_{5, \infty, \infty}$ and elliptic curves are replaced by TTV curves (cf. Definition \ref{def:ttv_curves} below). The reason why TTV curves are chosen here is due to a connection between those curves and $\Gamma_{5, \infty, \infty}$ found by Henri Darmon via Frey representations (cf. Definition 1.1 and Theorem 1.10 in \cite{dar2000} and Definition 8 in \cite{darmon2004}) .

The definitions of $\Gamma_{5, \infty, \infty}$ and the TTV curves are given in sections 1 and 2. The main results lie in Section 4, where Theorem \ref{thm:genus_and_non_ordinary_elmts} states a relation between the genus of a version of $X_0(p)$ for $\Gamma_{5, \infty, \infty}$ and the number of non-ordinary TTV curves mod $p$, providing evidence that the answer to the motivating question might be positive.

\subsection*{Acknowledgements:} The motivating question was raised by Eyal Goren, with whom I had helpful discussions. I would also like to thank an anonymous referee that provided useful comments that helped improve and clarify this article.

\section{Triangle Groups}

In this section, a particular triangle group and some important subgroups are defined. For more details and a more general description, cf. section 2 of \cite{takei2012}.

The triangle group $\Gamma_{5, \infty, \infty}$ is defined to be the subgroup of $\SL_2(\RR)$ generated by
\[
	\gamma_1 = \begin{pmatrix}
					-2 \cos(\frac{\pi}{5}) & -1 \\
					1 & 0
				\end{pmatrix}
	\ \textrm{ and } \
	\gamma_2 = \begin{pmatrix}
					0 & 1 \\
					-1 & 2
				\end{pmatrix}.
\]

In what follows, whenever $\Gamma \subseteq \SL_2(\RR)$, the group $\overline{\Gamma} \subseteq \PSL_2(\RR)$ will denote the image of $\Gamma$ in $\PSL_2(\RR)$.

It is a fact that $\Gamma_{5, \infty, \infty}$ is a Fuchsian group and, moreover, $\left. \overline{\Gamma_{5, \infty, \infty}} \right\backslash \cH^{*} \cong \PP^1$ where $\cH^* = \cH \cup \{ \textrm{cusps of } \Gamma_{5, \infty, \infty} \}$.

Note that $\Gamma_{5, \infty, \infty}$ is a subgroup of $\SL_2(\cO)$, where $\cO = \ZZ[\zeta_{10} + \zeta_{10}^{-1}]$ and $\zeta_{10} = e^{2 \pi i / 10}$.

\begin{df}
    Given a prime ideal $\fp$ of $\cO$, the \textit{congruence subgroups} of $\Gamma_{5, \infty, \infty}$ with level $\fp$ are defined to be
    \[
        \Gamma_{5, \infty, \infty}(\fp) = \left\{ M \in \Gamma_{5, \infty, \infty} \ \left| \ M \equiv \begin{pmatrix} 1 & 0 \\ 0 & 1 \end{pmatrix} \pmod{\fp} \right. \right\}, \textrm{ and }
    \]
    \[
        \Gamma_{5, \infty, \infty}^{(0)}(\fp) = \left\{ M \in \Gamma_{5, \infty, \infty} \ \left| \ M \equiv \begin{pmatrix} * & * \\ 0 & * \end{pmatrix} \pmod{\fp} \right. \right\}.
    \]
\end{df}

\section{The TTV curves}

\begin{df}
	\label{def:ttv_curves}
	The TTV curves are defined to be the following:
	\begin{equation}
	\label{eqn:TTV_family_of_curves}
	    \begin{array}{rcl}
    	    C^-(t) & : & y^2 = x^5 - 5x^3 + 5x + 2 - 4t \\
	        \\
        	C^+(t) & : & y^2 = (x+2) ( x^5 - 5x^3 + 5x + 2 - 4t ).
    	\end{array}
	\end{equation}
\end{df}

In \cite{ttv1991}, W. Tautz, J. Top, and A. Verberkmoes studied the two families of hyperelliptic
curves defined above. In particular, it was shown that their Jacobians have real multiplication by $\cO_L$, the ring of integers of $L = \QQ(\zeta_{10})^+ = \QQ(\zeta_{10} + \zeta_{10}^{-1})$.

\section{The Hasse-Witt and Cartier-Manin matrices}

This section is mainly based on Chapters 9 and 10 of \cite{serre1958} and \cite{yui1978}.

\subsection{Hasse-Witt matrix}

Let $k$ be a perfect field of characteristic $p > 2$ and $C$ a hyperelliptic curve of genus $g > 0$ defined over $k$. This notion can be defined in a more general context but we will focus on hyperelliptic curves.

\begin{df}
    Fix a basis of $H^1(C, \cO_C)$. The \emph{Hasse-Witt matrix} of $C$ is the matrix of the $p$-linear operator $F : H^1(C, \cO_C) \rightarrow H^1(C, \cO_C)$, where $F$ is the Frobenius operator.
\end{df}

\begin{rmk}
    Notice that the Hasse-Witt matrix is dependent on the basis chosen. Because of the $p$-linearity of the Frobenius operator, if $H$ and $H'$ are Hasse-Witt matrices with respect to different bases, then there is a matrix $U$ such that
    \[
        H' = U^{-1} H U^{(p)},
    \]
    where $U^{(p)}$ is the matrix obtained from $U$ by raising all its entries to the $p$-th power.
\end{rmk}

There is another way to essentially define the Hasse-Witt matrix of a curve. This is done in terms of the so called Cartier operator, which is studied in the next section.

\subsection{Cartier-Manin Matrix}

Suppose $C$ is given by
\begin{equation}
\label{eqn:curve_equation}
    y^2 = f(x)
\end{equation}
where $f(x)$ is a polynomial over $k$ without multiple roots of degree $2g + 1$.

Every element of $\Omega^1_C$ can be written as
\[
    \omega = d \varphi + \eta^p x^{p-1} dx
\]
for some $\varphi, \eta \in k(C)$.

\begin{df}
    The \emph{Cartier operator} $\mathscr{C} : H^0(C, \Omega^1_C) \rightarrow H^0(C, \Omega^1_C)$ is defined by
    \[
        \mathscr{C}(d \varphi + \eta^p x^{p-1} dx) = \eta dx.
    \]
\end{df}

\begin{df}
    The \emph{Cartier-Manin matrix} is the matrix of the $1/p$-linear operator $\mathscr{C} : H^0(C, \Omega^1_C) \rightarrow H^0(C, \Omega^1_C)$.
\end{df}

\begin{rmk}
    Because of the $1/p$-linearity of the operator $\mathscr{C}$, if $M$ and $M'$ are Cartier-Manin matrices with respect to different bases, then there is a matrix $U$ such that
    \[
        M' = U^{-1} M U^{(1/p)},
    \]
    where $U^{(p)}$ is the matrix obtained from $U$ by raising all its entries to the $p$-th power.
\end{rmk}

\begin{rmk}
    The Cartier operator, as defined here, is called the \emph{modified Cartier operator} in \cite{yui1978}. Moreover, the definition of the Cartier-Manin matrix given by N. Yui is slightly different (cf. page 381 of of \cite{yui1978}).
\end{rmk}

The relation between the Hasse-Witt matrix and the Cartier-Manin matrix arises as follows. It is known that $H^0(C, \Omega^1_C)$ is the dual of $H^1(C, \cO_C)$. Under this identification, the following result (cf. Prop. 9, Section 10 in \cite{serre1958}) holds.

\begin{prop}
    The map $\mathscr{C} : H^0(C, \Omega^1_C) \rightarrow H^0(C, \Omega^1_C)$ is the dual of $F : H^1(C, \cO_C) \rightarrow H^1(C, \cO_C)$.
\end{prop}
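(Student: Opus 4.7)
The plan is to realize Serre duality between $H^0(C,\Omega^1_C)$ and $H^1(C,\cO_C)$ explicitly via residues in \v{C}ech cohomology, then compare $F$ and $\mathscr{C}$ directly on the level of cocycles and local expansions. Fix an affine open cover $\{U_\alpha\}$ of $C$; a class $\xi \in H^1(C,\cO_C)$ is represented by a \v{C}ech $1$-cocycle $(f_{\alpha\beta})$, on which $F$ acts by $f_{\alpha\beta}\mapsto f_{\alpha\beta}^p$ since Frobenius acts on $\cO_C$ by $p$-th power. The Serre pairing takes the form
\[
\langle \omega,\xi\rangle \;=\; \sum_P \operatorname{Res}_P\!\bigl(\omega\,f_{\alpha\beta}\bigr),
\]
summed over the finite set of points where the cocycle has poles. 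I will reduce the proof to a local residue identity at each such point.

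The key local computation: if $t$ is a parameter at $P$ and one writes $\omega = d\varphi + \eta^p t^{p-1}\,dt$ near $P$ with $\eta = \sum b_m t^m$, then $\eta^p t^{p-1}\,dt = \sum b_m^p\,t^{mp+p-1}\,dt$, so $\operatorname{Res}_P(\omega) = b_{-1}^p = \operatorname{Res}_P(\mathscr{C}\omega)^p$. Next I use the semi-linearity relation $\mathscr{C}(g^p\omega) = g\,\mathscr{C}(\omega)$, which follows from $d(g^p)=0$ in characteristic $p$: indeed
\[
g^p\omega \;=\; g^p\,d\varphi + (g\eta)^p t^{p-1}\,dt \;=\; d(g^p\varphi) + (g\eta)^p t^{p-1}\,dt.
\]
Applied with $g = f_{\alpha\beta}$, this yields the local identity $\operatorname{Res}_P(\omega\,f_{\alpha\beta}^p) = \operatorname{Res}_P\!\bigl(f_{\alpha\beta}\,\mathscr{C}(\omega)\bigr)^p$. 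Summing over $P$ and using additivity of the $p$-th power map in characteristic $p$ gives
\[
\langle \omega, F\xi\rangle \;=\; \bigl\langle \mathscr{C}\omega,\,\xi\bigr\rangle^{p},
\]
which is precisely the statement that $\mathscr{C}$ is dual to $F$ once the semi-linear conventions are taken into account.

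The main obstacle is bookkeeping around semi-linearity: since $F$ is $p$-linear and $\mathscr{C}$ is $1/p$-linear, "dual" does not mean the naïve transpose, and the correct compatibility involves the $p$-th power on one side of the pairing. A secondary technical point is to verify that the local expansion $\omega = d\varphi + \eta^p t^{p-1}\,dt$ yields a well-defined residue identity — different choices of $\varphi$ change $\eta$ only in a way that does not affect its residue, so $\operatorname{Res}_P(\mathscr{C}\omega)$ depends only on $\omega$. Once these two points are cleanly set up, the argument reduces to a few lines of local residue calculus plus the Frobenius-linearity of the $p$-th power map.
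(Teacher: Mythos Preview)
The paper does not actually prove this proposition: it simply cites Proposition~9, Section~10 of \cite{serre1958}. Your proposal, by contrast, reconstructs the argument, and what you have written is essentially Serre's own proof. The two key ingredients---the residue identity $\operatorname{Res}_P(\omega)=\bigl(\operatorname{Res}_P(\mathscr{C}\omega)\bigr)^p$ coming from the decomposition $\omega=d\varphi+\eta^p t^{p-1}\,dt$, and the semi-linearity $\mathscr{C}(g^p\omega)=g\,\mathscr{C}(\omega)$---are exactly the computations Serre uses, and combining them via additivity of Frobenius on $k$ yields the adjunction $\langle \omega,F\xi\rangle=\langle \mathscr{C}\omega,\xi\rangle^{p}$ as you claim.

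One point worth tightening: your description of the Serre pairing as ``$\sum_P\operatorname{Res}_P(\omega\,f_{\alpha\beta})$ summed over the points where the cocycle has poles'' is informal. A \v{C}ech $1$-cocycle $(f_{\alpha\beta})$ is a collection of regular functions on overlaps, not a single meromorphic function with poles; what you really want is either Serre's r\'epartition (ad\`ele) model of $H^1(C,\cO_C)$, in which a class is represented by a principal part system $(r_P)_P$ and the pairing is literally $\sum_P\operatorname{Res}_P(\omega\,r_P)$, or else the statement that the cup product lands in $H^1(C,\Omega^1_C)$ and one applies the trace map. Either way the local residue computation you carry out is the correct core of the argument, and once the pairing is set up properly everything you wrote goes through.
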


N. Yui (cf. pages 380-381 in \cite{yui1978}) gives a concrete way of computing the Cartier-Manin matrix of a curve:

\begin{prop}
\label{prop:explicit_cm_matrix}
    Let $C$ be given by (\ref{eqn:curve_equation}). Then the Cartier-Manin matrix of $C$ with respect to the basis
    \[
        \frac{dx}{y} \ , x \frac{dx}{y} \ , \ \dotsc \ , \ x^{g-1} \frac{dx}{y}
    \]
    of $H^0(C, \Omega^1_C)$ is given by
    \[
        N^{(1/p)},
    \]
    where
    \[
        N = (c_{ip - j}) =
            \begin{pmatrix}
                c_{p-1} & c_{p - 2} & \dotsc & c_{p-g} \\
                c_{2p-1} & c_{2p - 2} & \dotsc & c_{2p-g} \\
                \dotsc \\
                c_{gp-1} & c_{gp - 2} & \dotsc & c_{gp-g} \\
            \end{pmatrix},
    \]
    and
    \[
        f(x)^{(p-1)/2} = \mysum c_r x^r.
    \]
\end{prop}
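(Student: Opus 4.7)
The plan is to compute $\mathscr{C}(\omega_j)$ directly for each basis element $\omega_j = x^{j-1} dx / y$ (with $1 \le j \le g$), and then to read off the coefficients in the basis $\omega_1, \ldots, \omega_g$ to obtain the Cartier--Manin matrix.

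First I would rewrite $\omega_j$ so as to isolate a $p$-th power. Using $y^2 = f(x)$ gives $y^{p-1} = f(x)^{(p-1)/2}$, hence
\[
\omega_j \;=\; \frac{x^{j-1}}{y}\, dx \;=\; \frac{y^{p-1}\, x^{j-1}}{y^{p}}\, dx \;=\; \left( \frac{1}{y} \right)^{p} f(x)^{(p-1)/2}\, x^{j-1}\, dx.
\]
The Cartier operator satisfies $\mathscr{C}(h^p \omega) = h\, \mathscr{C}(\omega)$ for $h \in k(C)$: this is immediate from the defining decomposition, since both $d\varphi$ and $\eta^p x^{p-1} dx$ scale correctly under multiplication by $h^p$. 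Applying this semilinearity with $h = 1/y$ reduces the problem to computing $\mathscr{C}$ on the polynomial differential $f(x)^{(p-1)/2}\, x^{j-1}\, dx = \sum_r c_r\, x^{r+j-1}\, dx$.

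The core calculation is then the action of $\mathscr{C}$ on a single monomial $c\, x^m\, dx$. If $m \not\equiv p-1 \pmod{p}$, then $x^m\, dx = \tfrac{1}{m+1}\, d(x^{m+1})$ is exact and $\mathscr{C}$ kills it. If $m = ip - 1$ for some positive integer $i$, then $c\, x^{ip-1}\, dx = (c^{1/p}\, x^{i-1})^{p} \cdot x^{p-1}\, dx$, and the semilinearity rule yields $\mathscr{C}(c\, x^{ip-1}\, dx) = c^{1/p}\, x^{i-1}\, dx$. Summing over those $r$ with $r + j = ip$ and multiplying through by $1/y$ then produces
\[
\mathscr{C}(\omega_j) \;=\; \sum_{i \ge 1} c_{ip - j}^{1/p}\, \omega_i,
\]
with the convention $c_r = 0$ for $r$ outside $\{0, 1, \ldots, (2g+1)(p-1)/2\}$.

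Finally I would verify that this sum is supported on $1 \le i \le g$, so that the result is already expressed in the chosen basis of $H^0(C, \Omega^1_C)$: since $\deg f(x)^{(p-1)/2} = (2g+1)(p-1)/2$, the inequality $ip - j \le (2g+1)(p-1)/2$ combined with $j \le g$ and $p > 2$ forces $i \le g$. Reading off the $(i,j)$-entry as $c_{ip-j}^{1/p}$ then yields exactly $N^{(1/p)}$ for the matrix $N$ in the statement. The step needing the most care is bookkeeping the $1/p$-linearity: it is precisely this Frobenius twist that makes the Cartier--Manin matrix $N^{(1/p)}$ rather than $N$, and one must extract $p$-th roots of the coefficients $c_r$ at the right moment in the semilinearity step, which is the single place where a sign-type error would flip the final answer.
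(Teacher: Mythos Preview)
Your argument is correct. The paper itself does not prove this proposition but simply attributes it to Yui \cite{yui1978}; your direct computation via the identity $y^{p-1}=f(x)^{(p-1)/2}$ and the $1/p$-semilinearity of $\mathscr{C}$ on monomial differentials is exactly the standard calculation found there.
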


\subsection{Jacobian of $C$}

From now on, $k$ will be a finite field of characteristic $p>2$.

Recall the following definitions:

\begin{df}
    An abelian variety $A$ of dimension $g$ over $k$ is called
    \begin{itemize}
        \item \emph{ordinary} if its $p$-rank is $g$, i.e., $\#(A[p]) = p^g$;
        \item \emph{supersingular} if $A$ is $\overline{k}$-isogenous to a power of a supersingular elliptic curve.
    \end{itemize}
\end{df}

\begin{rmk}
    As explained in Section 3.2 of \cite{zhu2000}, if $A$ is supersingular, then $A[p] = 0$. The converse holds if $g = 1$ or $2$ but not necessarily if $g > 2$.
\end{rmk}

Let $J = J(C)$ be the Jacobian of the curve $C$.

The results below show the relation between the Cartier-Manin matrix of $C$ and $J$.

\begin{prop}
    The $p$-rank of $J$ is bounded above by the rank of the Cartier-Manin matrix, i.e., $\sigma \leq \rk(M)$, where $\#(J[p]) = p^{\sigma}$ and $M$ denotes the Cartier-Manin matrix.
\end{prop}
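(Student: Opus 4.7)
The plan is first to reduce to $k = \bar{k}$, since neither $\sigma$ nor $\rk(M)$ changes under base change to the algebraic closure, and then to reinterpret $\sigma$ as the stable rank of the Frobenius operator $F$ on $H^1(C, \cO_C)$.

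Following Chapter 10 of \cite{serre1958}, since $F$ is a $p$-linear endomorphism of the finite-dimensional $\bar{k}$-vector space $H^1(C, \cO_C)$, the descending chain
\[
H^1(C, \cO_C) \supseteq F(H^1(C, \cO_C)) \supseteq F^2(H^1(C, \cO_C)) \supseteq \cdots
\]
stabilizes at some subspace $V_\infty$ on which $F$ acts bijectively. The key input is the identification $\dim_{\bar{k}} V_\infty = \sigma$: via the Dieudonn\'e-module description of $J[p]$ in terms of $H^1(C, \cO_C)$ equipped with its Frobenius, the \'etale part of $J[p]$ (whose $\FF_p$-rank is by definition $\sigma$) corresponds precisely to the unit-root subspace $V_\infty$.

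The inequality then falls out immediately. Since $V_\infty \subseteq F(H^1(C, \cO_C))$, one gets
\[
\sigma = \dim_{\bar{k}} V_\infty \leq \dim_{\bar{k}} F(H^1(C, \cO_C)) = \rk(F),
\]
where $\rk(F)$ is the rank of the Hasse-Witt matrix $H$ in any chosen basis. By the preceding proposition, $\mathscr{C}$ is the dual of $F$, so $\rk(\mathscr{C}) = \rk(F)$; since the Cartier-Manin matrix $M$ represents $\mathscr{C}$ and taking $p$-th (or $1/p$-th) powers of matrix entries preserves rank over a perfect field, one obtains $\rk(M) = \rk(F)$, whence $\sigma \leq \rk(M)$.

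The main obstacle is precisely the identification $\sigma = \dim_{\bar{k}} V_\infty$. This requires the dictionary between the $p$-torsion subscheme of the Jacobian and the semilinear algebra of the pair $(H^1(C, \cO_C), F)$, which I would invoke as a standard consequence of the Dieudonn\'e correspondence for finite commutative $p$-group schemes, as treated in \cite{serre1958}. Once that identification is granted, the remainder of the argument is a purely formal observation about ranks of iterated semilinear operators together with the duality already stated between $F$ and $\mathscr{C}$.
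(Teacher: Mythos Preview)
Your argument is correct and is essentially what lies behind the paper's proof, which simply cites Proposition 10 in Section 11 of \cite{serre1958}; that proposition identifies $\sigma$ with the stable rank $\dim V_\infty$ of $F$ acting on $H^1(C,\cO_C)$, from which $\sigma\le\rk(F)=\rk(M)$ follows exactly as you wrote. The only nuance is that Serre obtains the identification $\sigma=\dim V_\infty$ via the Artin--Schreier sequence rather than the Dieudonn\'e correspondence, but either route is valid and yields the same conclusion.
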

\begin{proof}
    This is a corollary of Proposition 10 in Section 11 of \cite{serre1958}.
\end{proof}

\begin{prop}
\label{prop:type_of_curve}
    Let $M$ be the Cartier-Manin matrix of $C$ and $N = M^{(p)}$. The following holds:
    \begin{enumerate}[(a)]
        \item $\det(N) \neq 0$ if and only if $J$ is ordinary.
        \item $N = 0$ if and only if $J$ is a product of supersingular elliptic curves.
        \item If the genus of $C$ is $2$, then $N^{(p)} N = 0$ if and only if $J$ is supersingular.
    \end{enumerate}
\end{prop}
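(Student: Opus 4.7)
The strategy is to translate each of the three matrix conditions on $N=M^{(p)}$ into a statement about the semilinear Frobenius $F$ acting on $H^1(C,\cO_C)$, and then invoke the standard dictionary between such statements and the structure of $J$. Throughout I use that $M$ represents the $1/p$-linear Cartier operator $\mathscr C$, that $\mathscr C$ is dual to $F$ by the preceding proposition, together with the identities $\det(A^{(p)})=\det(A)^p$ and $(AB)^{(p)}=A^{(p)}B^{(p)}$ for matrices over the perfect field $k$.

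For (a), $\det(N)=\det(M)^p$, so $\det(N)\neq 0$ iff $M$ is invertible iff $\mathscr C$ (equivalently $F$) is an isomorphism. It is classical (cf.\ \cite{serre1958}) that the $p$-rank of $J$ equals $\dim_k\bigcap_{n\ge 1}F^n(H^1(C,\cO_C))$; hence $F$ bijective is equivalent to the $p$-rank being $g$, i.e., $J$ ordinary. For (b), $N=0$ iff $M=0$ iff $F=0$ on $H^1(C,\cO_C)$, which says the $a$-number of $J$ is $g$; by Oort's theorem on superspecial abelian varieties this characterises $J$ as being $\ol{k}$-isomorphic to $E^g$ for a supersingular elliptic curve $E$, equivalently a product of supersingular elliptic curves.

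For (c), I first verify the matrix identity. Writing $H$ for the Hasse-Witt matrix in a fixed basis, $F(v)=Hv^{(p)}$ yields $F^2(v)=H\cdot H^{(p)}v^{(p^2)}$, so $F^2=0$ iff $H\cdot H^{(p)}=0$. Dually, $\mathscr C^2=0$ iff $M\cdot M^{(1/p)}=0$, and raising all entries to the $p$-th power gives $M^{(p)}M=N^{(p)}N=0$; thus $N^{(p)}N=0$ iff $F^2=0$. When $g=2$, a short case analysis on $\dim_k F(H^1(C,\cO_C))\in\{0,1,2\}$ shows that $F^2=0$ is equivalent to the stable image $\bigcap_n F^n(H^1(C,\cO_C))$ being zero, i.e., to $J$ having $p$-rank $0$; the remark preceding the statement then gives that in dimension $2$ this is equivalent to $J$ being supersingular.

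The main obstacle is conceptual bookkeeping rather than depth: one must carefully track the Frobenius twists linking $H$, $M$, $N$ and their iterates, and invoke the correct classical results translating the rank, vanishing, and two-step nilpotency of $F$ on $H^1(C,\cO_C)$ into the $p$-rank, $a$-number, and superspecial/supersingular type of $J$. Part (b) relies on Oort's theorem characterising superspecial abelian varieties by $a$-number $g$, while part (c) exploits the genus-$2$ coincidence from the preceding remark that $p$-rank $0$ forces supersingularity; accordingly the argument for (c) does not extend verbatim to higher genus.
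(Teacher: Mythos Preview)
Your argument is correct. The only cosmetic slip is the chain ``$M^{(p)}M=N^{(p)}N=0$'': as matrices $M^{(p)}M$ and $N^{(p)}N=M^{(p^2)}M^{(p)}$ differ by a Frobenius twist, so they are not literally equal, but over the perfect field $k$ the vanishing of one is equivalent to the vanishing of the other, which is all you use.

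As for the comparison: the paper does not actually prove this proposition. Its entire proof reads ``Cf.\ \cite{yui1978} (Theorems 3.1 and 4.1), \cite{nygaard1981} (Theorem 4.1) and \cite{manin1963} (p.~78).'' Your write-up therefore supplies precisely the content that the paper outsources to those references: Manin/Yui for the equivalence in (a) between invertibility of the Hasse--Witt/Cartier--Manin matrix and ordinarity via the $p$-rank formula $\sigma=\dim_k\bigcap_n F^n H^1(C,\cO_C)$; Nygaard/Oort for (b), identifying $F=0$ (equivalently $a$-number $g$) with superspeciality; and Yui/Manin for (c), where the genus-$2$ computation reduces supersingularity to $F^2=0$. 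So your approach is not ``different'' so much as it is the actual proof the paper points to; the added value of writing it out is that one sees transparently why (c) is special to $g\le 2$, namely the coincidence (noted in the paper's remark) that $p$-rank $0$ already forces supersingularity there.
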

\begin{proof}
    Cf. \cite{yui1978} (Theorems 3.1 and 4.1), \cite{nygaard1981} (Theorem 4.1) and \cite{manin1963} (p. 78).
\end{proof}

\subsection{Curves with real multiplication}

Let $L$ be a totally real number field such that $[L:\QQ] = g$ and $p$ a prime number that is unramified in $L$. In this subsection $C$ will denote a projective algebraic curve of genus $g$ and $J$ its Jacobian, which is assumed to have \emph{real multiplication by $\cO_L$}, that is, with an embedding of rings
\[
    \iota: \cO_L \longrightarrow \End(J)
\]
as explained in definition 2.2.1 of \cite{goren2002}.

In this section, the Cartier operator $\mathscr{C}$ (hence, the Cartier-Manin matrix) is studied via the corresponding operator on the Jacobian of $C$.

\begin{center}
\begin{tabular}{|c|c|}
    \hline
    $C$ & $J$ \\
    \hline
    action of $\mathscr{C}$ on $H^0(C, \Omega^1_C)$ & action of $V$ on $H^0(J, \Omega^1_J)$  \\
    action of $F$ on $H^1(C, \cO_C)$ & action of $F$ on $H^1(J, \cO_J)$ \\
    \hline
\end{tabular}
\end{center}

The vector spaces on the left column are isomorphic to the ones on the right column. Furthermore, the semi-linear operators on the left column coincide (via that isomorphism) to the ones on the right column.

\begin{thm}
\label{thm:structure_cm_matrix}
    As an $\cO_L \otimes k$-module, the space $H^1(J, \cO_J)$ decomposes as
    \[
        H^1(J, \cO_J) = \bigoplus\limits_{\sigma \in B} W_{\sigma},
    \]
    where
    \[
        B = \{ \sigma: \cO_L \rightarrow k \mid \sigma \textrm{ a ring homomorphism} \} \ \textrm{ and } \ \dim_k W_{\sigma} = 1.
    \]
    Moreover, the action of $F$ commutes with the action of $\cO_L \otimes k$ and satisfies the following
    \[
        F(W_{\sigma}) \subseteq W_{\Fr \circ \sigma}.
    \]
\end{thm}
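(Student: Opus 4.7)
The plan is to use the unramifiedness of $p$ in $L$ to decompose $\cO_L \otimes_{\ZZ} k$ explicitly, transport that decomposition through the $\cO_L$-action on $H^1(J, \cO_J)$ provided by $\iota$, and then check compatibility with Frobenius.

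First, since $p$ is unramified in $L$, the $\FF_p$-algebra $\cO_L \otimes_{\ZZ} \FF_p \cong \prod_{\fp \mid p} \cO_L/\fp$ is étale. Assuming $k$ is large enough to contain every residue field $\cO_L/\fp$ (which one may arrange by enlarging $k$, since the decomposition will be Galois-equivariant), there is a canonical isomorphism
\[
    \cO_L \otimes_{\ZZ} k \;\xrightarrow{\;\sim\;}\; \prod_{\sigma \in B} k, \qquad a \otimes c \;\longmapsto\; (\sigma(a) c)_{\sigma},
\]
yielding a complete system of orthogonal primitive idempotents $\{ e_{\sigma} \}_{\sigma \in B}$ with $|B| = g$ by the fundamental identity $\sum_{\fp \mid p} f(\fp) = g$. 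Applying these idempotents through $\iota$ to $H^1(J, \cO_J)$ gives a direct sum decomposition $H^1(J, \cO_J) = \bigoplus_{\sigma \in B} W_{\sigma}$ with $W_{\sigma} := e_{\sigma} \cdot H^1(J, \cO_J)$, and by construction $\iota(a)$ acts on $W_{\sigma}$ as multiplication by $\sigma(a) \in k$ for every $a \in \cO_L$.

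The remaining claim $\dim_k W_{\sigma} = 1$ is equivalent to $H^1(J, \cO_J)$ being a free $\cO_L \otimes_{\ZZ} k$-module of rank $1$, since a dimension count then forces $\sum_{\sigma} \dim_k W_{\sigma} = g = |B|$. This is a form of the Rapoport (or Deligne--Pappas) condition for a principally polarized abelian variety with real multiplication, and it holds automatically once $p$ is unramified in $L$ (see e.g.\ Section 2.2 of \cite{goren2002}). I expect this to be the only step that genuinely uses the unramifiedness hypothesis, and hence the main obstacle.

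Finally, for the intertwining of Frobenius, the absolute Frobenius $J \to J$ commutes with every endomorphism of $J$, so in particular with $\iota(a)$, and therefore $F \circ \iota(a) = \iota(a) \circ F$ on $H^1(J, \cO_J)$. Since $F$ is $p$-linear over $k$, for $x \in W_{\sigma}$ and $a \in \cO_L$ one has
\[
    \iota(a) \cdot F(x) \;=\; F\bigl(\iota(a) \cdot x\bigr) \;=\; F\bigl(\sigma(a) x\bigr) \;=\; \sigma(a)^{p} F(x) \;=\; (\Fr \circ \sigma)(a) \cdot F(x),
\]
so $F(x) \in W_{\Fr \circ \sigma}$ and $F(W_{\sigma}) \subseteq W_{\Fr \circ \sigma}$, as claimed.
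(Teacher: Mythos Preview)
Your argument is correct and is exactly the standard one. The paper does not actually prove this statement; its entire proof consists of the single line ``Cf.\ Lemma 2.3.1 and Remark 2.2.8 in \cite{goren&oort2000}.'' What you have written is essentially an unpacking of that reference: the idempotent decomposition of the \'etale algebra $\cO_L \otimes k$, the Rapoport freeness condition to pin down each $\dim_k W_{\sigma} = 1$, and the $p$-semilinearity computation for $F$ are precisely the ingredients of the cited lemma. So there is no genuine difference in approach---you have supplied the argument that the paper outsources.

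One small remark: your parenthetical about enlarging $k$ so that it contains every residue field $\cO_L/\fp$ is the right way to make sense of the statement, since otherwise $|B|$ can fall short of $g$. The paper is silent on this point, but in its applications (Section~\ref{sec:non_ordinary_locus_TTV}) the relevant $k$ is implicitly large enough, so no harm is done.
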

\begin{proof}
    Cf. Lemma 2.3.1 and Remark 2.2.8 in \cite{goren&oort2000}.
\end{proof}

\begin{rmk}
    Being the dual of $F$, a similar statement holds for the action of $V$ on $H^0(J, \Omega^1_J)$.
\end{rmk}

\begin{rmk}
    Consider the factorization of $p$ in $\cO_L$ given by
    \[
        p \cO_L = \fp_1 \fp_2 \dotsb \fp_r.
    \]

    Then, it is not hard to see that, with the notation of Theorem \ref{thm:structure_cm_matrix}, $B$ decomposes as
    \[
        B = B_1 \sqcup B_2 \sqcup \dotsb \sqcup B_r,
    \]
    where $\#B_i = f = f(\fp_i / p) = [ \cO_l / \fp_i : \FF_p ]$. Furthermore, $\Fr$ acts transitively on each $B_i$, i.e.,
    \[
        B_i = \{ \sigma_i = \Fr^f \circ \sigma_i \ , \ \Fr \circ \sigma_i \ , \ \dotsc \ , \ \Fr^{f - 1} \circ \sigma_i \}.
    \]
\end{rmk}

\section{Studying $C^{\pm}(t)$}
\label{sec:non_ordinary_locus_TTV}

In this section we return to the families of curves defined in (\ref{eqn:TTV_family_of_curves}). These curves have genus $g=2$ and, as was mentioned in the previous chapter, they have real multiplication by $\cO_L$ (where $L = \QQ(\sqrt{5})$).

Consider a prime $p > 2$ that is unramified in $L$.

\begin{lemma} There are only two possibilities for such a $p$:
\label{lem:characterization_of_p}
    \begin{itemize}
        \item $p$ is a product of two primes in $\cO_L$ (when $p \equiv 1, 4 \pmod{5}$); or
        \item $p$ is inert in $\cO_L$ (when $p \equiv 2,3 \pmod{5}$).
    \end{itemize}
\end{lemma}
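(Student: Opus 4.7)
The plan is to apply the standard theory of splitting of primes in a quadratic field together with quadratic reciprocity, since $L = \QQ(\sqrt{5})$ has discriminant $5$, and $\cO_L = \ZZ\!\left[\frac{1+\sqrt{5}}{2}\right]$.

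First I would observe that for a prime $p$ unramified in a quadratic field, there are only two options for its behavior in the ring of integers: either $p$ splits as a product of two distinct prime ideals, or $p$ remains inert. This is an immediate consequence of the fundamental identity $\sum_i e_i f_i = [L:\QQ] = 2$: with all $e_i = 1$, the only solutions are $(r,f) = (2,1)$ (split) or $(r,f) = (1,2)$ (inert). Note that ramification occurs only at primes dividing the discriminant of $L$, namely $p = 5$, so this dichotomy applies to every $p > 2$ with $p \neq 5$.

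Next I would use Kummer--Dedekind to determine which case occurs. Writing $\cO_L = \ZZ[\theta]$ with $\theta = \frac{1+\sqrt{5}}{2}$, the minimal polynomial is $h(x) = x^2 - x - 1$, whose discriminant is $5$. Factoring $p$ in $\cO_L$ then reduces to factoring $h(x)$ modulo $p$, which in turn is controlled by the Legendre symbol $\bigl(\frac{5}{p}\bigr)$: the polynomial splits (so $p$ splits) iff $5$ is a square mod $p$, and it is irreducible (so $p$ is inert) iff $5$ is a non-square mod $p$.

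Finally I would invoke quadratic reciprocity. Since $5 \equiv 1 \pmod{4}$, one has
\[
    \left( \frac{5}{p} \right) = \left( \frac{p}{5} \right).
\]
Computing the squares modulo $5$ gives $\{1, 4\}$, so $\bigl(\frac{p}{5}\bigr) = 1$ exactly when $p \equiv 1, 4 \pmod{5}$ and $\bigl(\frac{p}{5}\bigr) = -1$ exactly when $p \equiv 2, 3 \pmod{5}$. Combining with the previous step yields the stated dichotomy.

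There is no substantive obstacle here; the result is a textbook application of quadratic reciprocity to the specific field $\QQ(\sqrt{5})$, and the only minor care needed is to confirm that the assumption $p > 2$ together with unramifiedness rules out $p = 5$ so that exactly the two listed congruence classes mod $5$ remain.
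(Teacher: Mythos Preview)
Your argument is correct and is precisely the standard textbook computation; the paper itself does not give a proof at all but simply cites Fr\"ohlich--Taylor, Chapter~V, (1.1), which records exactly this splitting criterion for primes in a quadratic field. So there is no meaningful difference in approach---you have written out what the cited reference contains.
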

\begin{proof}
     Cf. (1.1) in Chapter V of \cite{frohlich&taylor1993}.
\end{proof}

\subsection{The Cartier-Manin matrix of the curve $C^-$}

Example 3.5 (or the proof of the main result) in \cite{ttv1991} shows that the action of $\cO_L$ on $H^0(C^-, \Omega^1)$ has two distinct eigenvectors, namely:
\[
    \frac{dx}{y}, x \frac{dx}{y}
\]

Thus, Lemma \ref{lem:characterization_of_p}, Theorem \ref{thm:structure_cm_matrix} and the remarks that follow it yield the result below.

\begin{thm}
\label{thm:cm_matrix_c-}
    The Cartier-Manin matrix of $C^-$ with respect to the basis $\{ \frac{dx}{y}, x \frac{dx}{y} \}$ of $H^0(C^-, \Omega^1)$ is given by
    \[
        M = \left\{
                \begin{array}{ll}
                    \begin{pmatrix}
                        * & 0 \\
                        0 & *
                    \end{pmatrix},
                    &
                    \textrm{ if } p \equiv 1, 4 \pmod{5}
                    \\
                    \begin{pmatrix}
                        0 & * \\
                        * & 0
                    \end{pmatrix},
                    &
                    \textrm{ if } p \equiv 2, 3 \pmod{5}.
                \end{array}
            \right.
            ,
    \]
    where $*$ are elements of $\FF_p[t]$.
\end{thm}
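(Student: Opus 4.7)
The plan is to combine three ingredients: the structural decomposition from Theorem \ref{thm:structure_cm_matrix}, the classification of $p$ from Lemma \ref{lem:characterization_of_p}, and the fact cited from \cite{ttv1991} that the chosen basis of $H^0(C^-,\Omega^1)$ diagonalizes the real multiplication.

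First, since $J(C^-)$ has real multiplication by $\cO_L$ with $L=\QQ(\sqrt{5})$, I transfer to $H^0(C^-,\Omega^1)\cong H^0(J,\Omega^1_J)$ and apply Theorem \ref{thm:structure_cm_matrix} to obtain a decomposition
\[
    H^0(C^-,\Omega^1) = W_{\sigma_1} \oplus W_{\sigma_2}
\]
into one-dimensional $\cO_L\otimes k$-eigenspaces indexed by the two ring homomorphisms $\sigma_1,\sigma_2:\cO_L\to k$. Using the statement from Example 3.5 of \cite{ttv1991}, the basis vectors $\omega_1=dx/y$ and $\omega_2=x\,dx/y$ are $\cO_L$-eigenvectors with distinct eigenvalues, so after relabelling I may assume $\omega_i\in W_{\sigma_i}$. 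By the remark following Theorem \ref{thm:structure_cm_matrix}, the Cartier operator $\mathscr{C}$ (which plays the role of $V$) satisfies $\mathscr{C}(W_\sigma)\subseteq W_{\Fr^{-1}\circ\sigma}$.

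Next, I split into the two cases of Lemma \ref{lem:characterization_of_p} by reading off the $\Fr$-orbit structure on $B=\{\sigma_1,\sigma_2\}$ from the remark after Theorem \ref{thm:structure_cm_matrix}. In the split case $p\equiv 1,4\pmod{5}$, both residue degrees equal $f=1$, so $\Fr$ fixes each $\sigma_i$ and $\mathscr{C}$ preserves each $W_{\sigma_i}$; the matrix in the basis $\{\omega_1,\omega_2\}$ is therefore diagonal. In the inert case $p\equiv 2,3\pmod{5}$, $\Fr$ generates a single orbit of size $f=2$, so it swaps $\sigma_1$ and $\sigma_2$; consequently $\mathscr{C}$ interchanges $W_{\sigma_1}$ and $W_{\sigma_2}$, producing an anti-diagonal matrix.

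Finally, to confirm that the nonzero entries are polynomials in $t$ over $\FF_p$, I invoke Proposition \ref{prop:explicit_cm_matrix}: the entries of the matrix $N=(c_{ip-j})$ are polynomials in $t$ over $\FF_p$ since $f(x)=x^5-5x^3+5x+2-4t\in\FF_p[t][x]$, and the Frobenius twist $N^{(1/p)}$ does not affect which entries vanish. The main obstacle I anticipate is purely a bookkeeping one: pinning down which $\omega_i$ sits in which $W_{\sigma_i}$ and carefully tracking the direction of the Frobenius shift ($\Fr$ versus $\Fr^{-1}$) for the Cartier operator as opposed to absolute Frobenius. Since $|B|=2$, however, both $\Fr$ and $\Fr^{-1}$ induce the same partition on $B$, so the diagonal/anti-diagonal dichotomy is insensitive to these conventions and the argument goes through cleanly.
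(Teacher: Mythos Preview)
Your proposal is correct and follows essentially the same approach as the paper, which simply cites the eigenvector computation from \cite{ttv1991} and then invokes Lemma \ref{lem:characterization_of_p}, Theorem \ref{thm:structure_cm_matrix}, and the remarks following it. Your write-up is in fact more explicit than the paper's, spelling out the orbit analysis and the $\Fr$ versus $\Fr^{-1}$ bookkeeping that the paper leaves implicit.
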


\begin{rmk}
    A curious consequence of this fact is the following non-trivial result. Let $p$ be a prime number such that $p \neq 2, 5$, $f(x) = x^5 - 5x^3 + 5x + 2 - 4t \in \ZZ[t][x]$ and $f(x)^{(p-1)/2} = \mysum c_r x^r$. If
    \begin{itemize}
        \item $p \equiv 1, 4 \pmod{5}$, then
            \[
                c_{p-1} \equiv c_{2p-2} \equiv 0 \pmod{p}
            \]
        \item $p \equiv 2, 3 \pmod{5}$, then
            \[
                c_{p-2} \equiv c_{2p-1} \equiv 0 \pmod{p}.
            \]
    \end{itemize}
\end{rmk}
\begin{proof}
    Follows from the previous result and Proposition \ref{prop:explicit_cm_matrix}.
\end{proof}

\begin{cor}
\label{cor:c-_structure}
    If $p \equiv 2$ or $3 \pmod{5}$, then the Jacobian of the curve $C^-$ is either supersingular or ordinary.
\end{cor}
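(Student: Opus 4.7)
The plan is to combine the special antidiagonal shape of $M$ provided by Theorem~\ref{thm:cm_matrix_c-} with the criteria in Proposition~\ref{prop:type_of_curve} to show that the ``intermediate'' case (Jacobian with $p$-rank $1$, i.e.\ neither ordinary nor supersingular) cannot arise when $p\equiv 2,3\pmod 5$. Concretely, I would fix a geometric point $t\in\overline{\FF_p}$, specialize $M$ at $t$, and carry out the elementary linear algebra needed to compare $\det(N)$ with $N^{(p)}N$ in genus $2$.

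By Theorem~\ref{thm:cm_matrix_c-}, under the hypothesis $p\equiv 2,3\pmod 5$, there exist polynomials $a(t),b(t)\in\FF_p[t]$ such that the Cartier--Manin matrix of $C^-(t)$ takes the form
\[
    M \;=\; \begin{pmatrix} 0 & a(t) \\ b(t) & 0 \end{pmatrix}.
\]
Setting $N=M^{(p)}$ as in Proposition~\ref{prop:type_of_curve}, one computes
\[
    N \;=\; \begin{pmatrix} 0 & a(t)^{p} \\ b(t)^{p} & 0 \end{pmatrix},
    \qquad
    N^{(p)}N \;=\; \begin{pmatrix} a(t)^{p^{2}}b(t)^{p} & 0 \\ 0 & a(t)^{p}b(t)^{p^{2}} \end{pmatrix}.
\]
Thus $\det(N)=-a(t)^{p}b(t)^{p}$, while $N^{(p)}N=0$ if and only if $a(t)b(t)=0$.

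Now I apply Proposition~\ref{prop:type_of_curve}. Part (a) says that $J(C^-(t))$ is ordinary precisely when $\det(N)\neq 0$, i.e.\ when $a(t)b(t)\neq 0$. Part (c), which applies because $C^-$ has genus $2$, says that $J(C^-(t))$ is supersingular precisely when $N^{(p)}N=0$, i.e.\ when $a(t)b(t)=0$. These two conditions are exact complements of one another, so every specialization falls into one of the two classes, proving the corollary.

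The approach has no real obstacle: the only substantive input is the antidiagonal shape of $M$, which is already supplied by Theorem~\ref{thm:cm_matrix_c-}; after that, the argument is a one-line computation showing that the trichotomy (ordinary / supersingular / neither) offered by Proposition~\ref{prop:type_of_curve} degenerates to a dichotomy because an antidiagonal $2\times 2$ matrix cannot have rank exactly $1$ without also having a zero diagonal entry in its $N^{(p)}N$.
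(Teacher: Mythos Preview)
Your argument is correct and is essentially the same as the paper's: both invoke Theorem~\ref{thm:cm_matrix_c-} for the antidiagonal shape of $M$ and Proposition~\ref{prop:type_of_curve}(a),(c), then observe that $\det(N)=0$ and $N^{(p)}N=0$ are equivalent for such matrices. You simply make the linear-algebra verification explicit where the paper leaves it to the reader.
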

\begin{proof}
    This is a direct consequence of the previous theorem and of Proposition \ref{prop:type_of_curve}.

    In fact, using Proposition \ref{prop:type_of_curve}, we have that the Jacobian $J^-$ of $C^-$ is ordinary if and only if $\det(N) \neq 0$, where $N = M^{(p)}$ and $M$ is the Cartier-Manin matrix of $C^-$. Also, since $r=5$ (and, thus, the genus of $C^-$ is $2$), $J^-$ is supersingular if and only if $N^{(p)} N = 0$.
    So it suffices to check that $\det(N) = 0$ if and only if $N^{(p)} N = 0$. This follows easily from the previous theorem.
\end{proof}

\subsection{The Cartier-Manin matrix of the curve $C^+$}

Following the ideas of the proof of the main result of \cite{ttv1991}, one can compute the action of $\cO_L$ on $H^0(C^+, \Omega^1)$.

\begin{prop}
    The Jacobian of the curve $C^+$ has real multiplication by $\cO_L$. Moreover, the action of $\cO_L$ on $H^0(C^+, \Omega^1)$ has a basis of eigenvectors, namely:
    \[
        dx/y \ , \ dx/y + y dx/y
    \]
\end{prop}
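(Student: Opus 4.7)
The plan is to prove the two assertions in sequence. For the real multiplication, I would adapt the construction from \cite{ttv1991} used for $C^-$: there the endomorphism realizing $\phi = \tfrac{1+\sqrt{5}}{2}$ is produced via an explicit correspondence on $C^- \times C^-$ whose induced action on $J(C^-)$ satisfies $T^2 - T - 1 = 0$, giving the embedding $\cO_L = \ZZ[\phi] \hookrightarrow \End(J(C^-))$. One expects the same strategy to yield $\iota \colon \cO_L \hookrightarrow \End(J(C^+))$, with the explicit form of the correspondence modified to account for the extra factor $(x+2)$ in the defining equation of $C^+$. A useful clue is the factorization
\[
    x^5 - 5x^3 + 5x + 2 = (x+2)(x^2 - x - 1)^2,
\]
in which $x^2 - x - 1$ is the minimal polynomial of $\phi$ over $\QQ$; this reveals how the golden ratio enters geometrically and indicates that the new branch point at $x = -2$ should be treated as additional data in the correspondence rather than as an obstruction.

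Once $\iota$ is produced, the action $\iota(\phi)^*$ on $H^0(J(C^+), \Omega^1) \cong H^0(C^+, \Omega^1)$ is described by a $2 \times 2$ matrix $A$ in the standard basis $\{dx/y,\, x\, dx/y\}$. For generic $t$ the Jacobian is simple---any nontrivial decomposition would force an elliptic factor admitting an action of the real quadratic order $\cO_L$, which is impossible---so both real places of $L$ occur in the representation of $\cO_L$ on $H^0(C^+, \Omega^1)$. Consequently $A$ has distinct eigenvalues $\phi$ and $\bar\phi = 1 - \phi$ and is diagonalizable. Writing the matrix $A$ down explicitly from the correspondence and solving the two eigenvalue equations $Av = \phi v$ and $Av = \bar\phi v$ produces the eigenvectors, and a direct substitution verifies that the pair displayed in the proposition is correct.

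The main obstacle is the explicit computation of $\iota(\phi)^*$ on differentials, equivalently of the matrix $A$. This requires writing the correspondence on $C^+ \times C^+$ down concretely and then carefully pulling back each of $dx/y$ and $x\, dx/y$, taking full account of the additional branch point at $x = -2$ absent in the $C^-$ case; in particular, the pullback formulas for $C^-$ from \cite{ttv1991} cannot be used verbatim. Once $A$ is in hand, the remaining diagonalization and verification of the claimed eigenvectors is routine linear algebra.
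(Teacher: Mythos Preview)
Your proposal is a strategy outline rather than a proof: you never actually construct the correspondence on $C^+ \times C^+$, never write down the matrix $A$, and you explicitly flag this computation as ``the main obstacle''. Without it, neither the existence of the embedding $\cO_L \hookrightarrow \End(J(C^+))$ nor the identification of the specific eigenvectors is established. Your diagonalizability argument is also shaky: simplicity of $J(C^+)$ for generic $t$ is asserted but not proved, and even granting it, the claim that both real embeddings of $L$ must occur on $H^0(C^+,\Omega^1)$ requires the Rapoport-type condition, which does not follow from simplicity alone.

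The paper takes a different and much more direct route that sidesteps all of this. It realizes $C^+$ as the quotient $D_t/\sigma$, where
\[
    D_t : y^2 = x^{10} + t x^5 + 1, \qquad \sigma(x,y) = (1/x,\, y/x^5),
\]
via an explicit map $\varphi\colon D_t \to C^+$, $(x,y)\mapsto (x+1/x,\, y(x+1)/x^3)$. The curve $D_t$ carries the obvious automorphism $\zeta\colon (x,y)\mapsto(\zeta_5 x, y)$, and the real multiplication on $J(C^+)$ is obtained by checking that $\zeta^* + (\zeta^{-1})^*$ preserves the $\sigma$-invariant differentials $(\Omega^1_{D_t})^{\sigma}$. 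A basis for that space is $\omega_1 = (x^2-x)\,dx/y$ and $\omega_2 = (x^3-1)\,dx/y$, and one computes immediately that these are eigenvectors with eigenvalues $\zeta_5^2+\zeta_5^{-2}$ and $\zeta_5+\zeta_5^{-1}$. Finally $\varphi^*$ identifies the claimed differentials on $C^+$ with $\omega_1$ and $\omega_2$. No correspondence on $C^+\times C^+$ is ever needed, the eigenvalues are visibly distinct, and the whole argument is a short explicit calculation on $D_t$ rather than the open-ended computation your plan calls for.
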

\begin{proof}

    Tautz-Top-Verberkmoes (\cite{ttv1991}) showed that $C^+$ is the quotient $D_t / \sigma$, where
    \[
        D_t : y^2 = x^{10} + tx^5 + 1
    \]
    and $\sigma \in \End(D_t)$ defined by
    \[
        \sigma: (x,y) \mapsto (1/x, y/x^5).
    \]

    Using that
    \[
        X^{2n} + 1 = X^n (X+X^{-1}) \cdot g(X^2 + X^{-2}) \in k[X,X^{-1}]
    \]
    for any odd $n$, it follows that the map $\varphi: D_t \rightarrow C^+$ given by
    \[
        \varphi: (x,y) \mapsto (x + 1/x, y(x+1)/x^3)
    \]
    is well-defined and corresponds to the natural quotient map $D_t \rightarrow D_t / \sigma$. Moreover, it makes the diagram below commutative
    \[
        \xymatrix{
            D_t \ar[r]^{\varphi} \ar[d] & C^+ \ar[d] \\
            \PP^1 \ar[r] & \PP^1
        },
    \]
    where
    \[
        \begin{array}{ccc}
                \PP^1 & \rightarrow & \PP^1 \\
                x & \mapsto & x + 1/x
        \end{array}
    \]
    and the vertical maps are just
    \[
        (x,y) \mapsto x.
    \]

    The curve $D_t$ has multiplication by $\cO_{\QQ(\zeta_5)}$ coming from the map
    \[
        \zeta: (x,y) \mapsto (\zeta_5x, y).
    \]

    To prove that the Jacobian of $C^+$ has multiplication by $\cO_L$, it is enough to show that the action of $\zeta^* + (\zeta^{-1})^*$ preserves the space $(\Omega^1_{D_t})^{\sigma}$ of $\sigma$-invariant differentials of $D_t$. One checks that a basis for $(\Omega^1_{D_t})^{\sigma}$ is given by
    \[
        \omega_1 = (x^2-x) dx/y \ , \ \omega_2 = (x^3-1) dx/y.
    \]

    Now, by the definition of $\zeta$, one computes that
    \[
        [ \zeta^* + (\zeta^{-1})^* ] \omega_1 = (\zeta_5^2 + \zeta_5^{-2}) \omega_1
    \]
    and
    \[
        [ \zeta^* + (\zeta^{-1})^* ] \omega_2 = (\zeta_5 + \zeta_5^{-1}) \omega_2.
    \]

    Now it remains only to show that
    \[
        dx/y \ , \ dx/y + y dx/y \ \in \Omega^1_{C^+}
    \]
    are eigenvectors for the action of $\cO_L$.

    Notice that the action on $\Omega^1_{C^+}$ comes from the action on $(\Omega^1_{D_t})^{\sigma}$ (these spaces are identified via $\varphi$). Now, the definition of $\varphi$ yields
    \[
        \begin{array}{c}
            \varphi^* (dx/y) = \omega_1 \\
            \varphi^* (dx/y + y dx/y) = \omega_2.
        \end{array}
    \]

    From the previous computations, this finishes the proof.
\end{proof}

This result and Lemma \ref{lem:characterization_of_p} yield

\begin{thm}
    The Cartier-Manin matrix of $C^+$ with respect to the basis $\{ \frac{dx}{y}, x \frac{dx}{y} \}$ of $H^0(C^-, \Omega^1)$ is given by
    \[
        M = \left\{
                \begin{array}{ll}
                    \begin{pmatrix}
                        a & b - a \\
                        0 & b
                    \end{pmatrix},
                    &
                    \textrm{ if } p \equiv 1, 4 \pmod{5}
                    \\
                    \begin{pmatrix}
                        a & b \\
                        a & -a
                    \end{pmatrix},
                    &
                    \textrm{ if } p \equiv 2, 3 \pmod{5}.
                \end{array}
            \right.
    \]
    for some
    \[
        a, b \in \FF_p[t].
    \]
\end{thm}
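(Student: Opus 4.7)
The plan is to compute the Cartier--Manin matrix first in the $\cO_L$-eigenbasis supplied by the preceding proposition, where the shape is forced by Theorem \ref{thm:structure_cm_matrix} together with Lemma \ref{lem:characterization_of_p}, and then to translate back to the basis $\{dx/y,\, x\,dx/y\}$ by a direct additive computation.

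Concretely, let $e_1 = dx/y$ and $e_2 = dx/y + x\,dx/y$ be the two $\cO_L$-eigenvectors in $H^0(C^+, \Omega^1)$, so that $dx/y = e_1$ and $x\,dx/y = e_2 - e_1$. By the remarks following Theorem \ref{thm:structure_cm_matrix} (using the identification of $\mathscr{C}$ with the operator $V$ on $H^0(J,\Omega^1)$), the Cartier operator carries the eigenspace $W'_\sigma$ into $W'_{\Fr^{-1}\circ \sigma}$. If $p \equiv 1,4 \pmod 5$, Lemma \ref{lem:characterization_of_p} says $p$ splits in $\cO_L$, so each $B_i = \{\sigma_i\}$ is a singleton, Frobenius fixes each embedding, $\mathscr{C}$ preserves each eigenspace, and the matrix of $\mathscr{C}$ in $\{e_1,e_2\}$ is diagonal, say $\mathrm{diag}(a,b)$ with $a,b \in \FF_p[t]$. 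If $p \equiv 2,3 \pmod 5$, then $p$ is inert, the unique $B_1$ has $f=2$, Frobenius swaps $\sigma_1 \leftrightarrow \sigma_2$, and $\mathscr{C}$ swaps the two eigenspaces; its matrix in $\{e_1,e_2\}$ is therefore antidiagonal, of the form $\smtx{0}{b}{a}{0}$ for some $a,b \in \FF_p[t]$.

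It remains to rewrite these matrices in the basis $\{dx/y,\,x\,dx/y\}$. Using the additivity of $\mathscr{C}$, one has $\mathscr{C}(dx/y) = \mathscr{C}(e_1)$ and $\mathscr{C}(x\,dx/y) = \mathscr{C}(e_2) - \mathscr{C}(e_1)$; expanding each image in the new basis via $e_2 = dx/y + x\,dx/y$ yields, respectively,
\[
    \begin{pmatrix} a & b - a \\ 0 & b \end{pmatrix}
    \qquad \text{and} \qquad
    \begin{pmatrix} a & b - a \\ a & -a \end{pmatrix}.
\]
The first already matches the stated form; in the second, renaming $b-a$ as $b$ (both symbols being unconstrained elements of $\FF_p[t]$) matches the theorem.

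The substantive geometric input has been packaged into the previous proposition, so this argument is essentially bookkeeping and I do not expect a serious obstacle. The one point to keep in mind is that $\mathscr{C}$ is $1/p$-semilinear rather than $k$-linear, which in general forces the twisted change-of-basis formula $M' = U^{-1} M U^{(1/p)}$ recalled earlier; since the computation above relies only on additivity, however, the twist never has to be invoked explicitly.
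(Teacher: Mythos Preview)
Your argument is correct and is exactly the approach the paper has in mind: the paper's ``proof'' is simply the sentence ``This result and Lemma \ref{lem:characterization_of_p} yield'', and you have unpacked that by diagonalising (resp.\ anti-diagonalising) in the $\cO_L$-eigenbasis $\{e_1,e_2\}$ via Theorem \ref{thm:structure_cm_matrix}, then changing back to $\{dx/y,\,x\,dx/y\}$. Your remark that only additivity is used is the right justification for ignoring the $1/p$-twist, since the change-of-basis matrix has entries in $\FF_p$ (indeed in $\{0,\pm 1\}$), so $U^{(1/p)}=U$ anyway; note also that the paper's ``$dx/y + y\,dx/y$'' in the preceding proposition is a typo for ``$dx/y + x\,dx/y$'', as your computation (and the pullback check $\varphi^*(dx/y + x\,dx/y)=\omega_2$) confirms.
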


\begin{cor}
    If $p \equiv 2$ or $3 \pmod{5}$, then the Jacobian of the curve $C^+$ is either supersingular or ordinary.
\end{cor}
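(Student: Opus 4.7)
The plan is to mirror the proof of Corollary \ref{cor:c-_structure} essentially verbatim, with the extra twist that the $C^+$ matrix is no longer diagonal/antidiagonal but upper/lower triangular in structure. By Proposition \ref{prop:type_of_curve}(a) and (c), since $g(C^+) = 2$, the Jacobian $J^+$ is ordinary iff $\det(N) \neq 0$ and supersingular iff $N^{(p)} N = 0$, where $N = M^{(p)}$. So the entire task reduces to checking the implication $\det(N) = 0 \Rightarrow N^{(p)} N = 0$ for the specific form of $M$ given by the preceding theorem when $p \equiv 2, 3 \pmod{5}$.

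Concretely, when $p \equiv 2, 3 \pmod{5}$ the previous theorem gives $M = \smtx{a}{b}{a}{-a}$ with $a, b \in \FF_p[t]$, so that $N = \smtx{a^p}{b^p}{a^p}{-a^p}$. I would compute
\[
\det(N) = -a^{2p} - a^p b^p = -a^p(a^p + b^p) = -a^p (a+b)^p,
\]
the last equality using that we are in characteristic $p$. Hence $\det(N) = 0$ if and only if either $a = 0$ or $b = -a$ in $\FF_p[t]$.

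The remaining step is a two-case direct verification that $\det(N) = 0$ forces $N^{(p)} N = 0$. In the case $a = 0$ the matrix $N$ becomes strictly upper triangular, so even $N^2 = 0$ and a fortiori $N^{(p)} N = 0$. In the case $b = -a$, factor out $a^p$ to write $N = a^p \smtx{1}{-1}{1}{-1}$ and $N^{(p)} = a^{p^2} \smtx{1}{-1}{1}{-1}$; then $N^{(p)} N = a^{p^2 + p} \smtx{1}{-1}{1}{-1}^2 = 0$, since the matrix $\smtx{1}{-1}{1}{-1}$ squares to zero.

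There is no real obstacle here: the whole argument is a short bookkeeping computation once the structural input from Proposition \ref{prop:type_of_curve} and the previous theorem is in place. The only point that is worth highlighting, and that makes the computation work as cleanly as in the $C^-$ case, is the Frobenius identity $a^p + b^p = (a+b)^p$ in characteristic $p$, which is what converts the vanishing of $\det(N)$ into the clean linear condition $a(a+b) = 0$.
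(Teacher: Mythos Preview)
Your proof is correct and follows exactly the approach the paper indicates (``similar to the proof of Corollary \ref{cor:c-_structure}''): reduce to showing $\det(N)=0\Rightarrow N^{(p)}N=0$ for the shape $M=\smtx{a}{b}{a}{-a}$ and verify this by the two-case computation you give. You have in fact filled in the details the paper leaves implicit, and the factorization $\det(N)=-a^p(a+b)^p$ together with the nilpotence of $\smtx{1}{-1}{1}{-1}$ is precisely the clean way to do it.
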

\begin{proof}
    This is a direct consequence of the previous theorem and of Proposition \ref{prop:type_of_curve}. The proof is similar to the proof of corollary \ref{cor:c-_structure}.
\end{proof}

\subsection{A relation between $X_{5, \infty, \infty}^{(0)}(\fp)$ and the family $C^-$}
\label{ssc:genus_and_non_ordinary_elmts}

\begin{wrapfigure}{l}{0.5\textwidth}
    \includegraphics[scale=0.5]{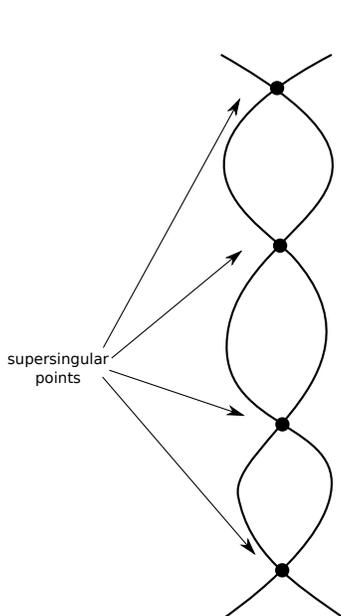}
    \caption{Reduction of $X_0(p)$ modulo $p$}
    \label{fig:X_0(p)_modp}
\end{wrapfigure}

It is known that $X_0(p) = \Gamma_0(p) \backslash \cH^*$ admits an integral model for which the reduction modulo $p$ consists of two copies of $X_0(1)_{\FF_p} = \PP^1_{\FF_p}$ crossing transversally at the supersingular points as shown in figure \ref{fig:X_0(p)_modp} (cf. Theorem 6.9, page DeRa-144, in \cite{deligne&rapoport1973}). In particular, there is a relation between the genus of $X_0(p)$ and the number of supersingular elliptic curves modulo $p$.

In this subsection we investigate a similar property for the triangular modular curve $X_{5, \infty, \infty}^{(0)}(\fp)$: we show that in certain cases, the genus of the curve $X_{5, \infty, \infty}^{(0)}(\fp)$ is closely related to the number of non-ordinary elements of the family of curves $C^-$. More specifically, the following result holds:

\begin{thm}
\label{thm:genus_and_non_ordinary_elmts}
    Let $p > 5$ be a prime number such that $p$ splits in $\cO_{\QQ(\sqrt{5})}$ (i.e., $p \equiv 1$ or $4 \pmod{5}$) and take $\fp$ a prime ideal above $p$. Furthermore, let $g$ be the genus of $X_{5, \infty, \infty}^{(0)}(\fp)$ and $d(t) = \det(M^{(p)})$, where $M$ is the Cartier-Manin matrix of $C^-$ (as computed in Theorem \ref{thm:cm_matrix_c-}). Then
    \[
        g = \deg(d(t)) + \delta,
    \]
    where
    \[
        \delta = \left\{
                    \begin{array}{ll}
                        -1, & \textrm{if } p \equiv 1 \pmod{5} \\
                        1, & \textrm{if } p \equiv 4 \pmod{5}.
                    \end{array}
                \right.
    \]
\end{thm}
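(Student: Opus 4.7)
The plan is to compute both sides of the asserted identity independently, as explicit functions of $p$ in each residue class, and match them.

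\textbf{Computing $\deg d(t)$.} By Theorem \ref{thm:cm_matrix_c-}, when $p \equiv 1, 4 \pmod 5$ the matrix $M$ is diagonal, so Proposition \ref{prop:explicit_cm_matrix} gives
\[
    d(t) = \det(M^{(p)}) = c_{p-1}(t)\, c_{2p-2}(t),
\]
where $c_r(t)$ is the coefficient of $x^r$ in $f(x)^{(p-1)/2}$ and $f(x) = x^5 - 5x^3 + 5x + 2 - 4t$. Writing $f = g - 4t$ with $g(x) = x^5 - 5x^3 + 5x + 2$ and expanding binomially in $t$, one obtains $\deg_t c_r(t) = (p-1)/2 - k_{\min}(r)$, where $k_{\min}(r)$ is the smallest $k$ such that $[x^r] g(x)^k \not\equiv 0 \pmod p$. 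A short multinomial count of $g(x)^k$ in each residue class then yields $\deg d(t)$ as an explicit linear function of $p$; one has to check in passing that the relevant multinomial coefficients are units modulo $p$, which is immediate for $p>5$.

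\textbf{Computing $g$.} Apply Riemann--Hurwitz to the natural cover $\pi : X_{5,\infty,\infty}^{(0)}(\fp) \to X_{5,\infty,\infty}(1) \cong \PP^1$. Its degree equals $p+1 = |\PP^1(\FF_p)|$, provided the image of $\overline{\Gamma_{5,\infty,\infty}}$ in $\PSL_2(\cO/\fp) \cong \PSL_2(\FF_p)$ acts transitively on $\PP^1(\FF_p)$, a strong-approximation-type fact for this triangle group treated in \cite{takei2012}. Ramification lives only above the elliptic point (of order $5$) and the two cusps of the base. At the elliptic point, $\gamma_1$ has trace $-(1 + \sqrt 5)/2 \in \FF_p$ (since $p$ splits) and eigenvalues that are primitive tenth roots of unity in $\overline{\FF_p}$, so $\gamma_1 \pmod \fp$ has order $5$ in $\PSL_2(\FF_p)$; the preimages correspond to $\gamma_1$-orbits on $\PP^1(\FF_p)$, each of length $1$ or $5$. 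The number of fixed points of $\gamma_1$ on $\PP^1(\FF_p)$ is $2$ iff fifth roots of unity lie in $\FF_p^\times$, i.e., iff $p \equiv 1 \pmod 5$, and $0$ otherwise. Each of the two cusp stabilizers is generated by a parabolic element which, after conjugation, acts on $\FF_p$ as a nontrivial translation (its translation parameter is a $p$-unit for $p>5$), so it contributes exactly two preimages (widths $1$ and $p$) above that cusp. Inserting these counts into Riemann--Hurwitz yields $g$ as an explicit linear function of $p$, again depending on $p \pmod 5$.

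\textbf{Matching and the source of $\delta$.} A direct arithmetic comparison of the two expressions confirms the equality $g = \deg(d(t)) + \delta$. The shift $\delta$ comes entirely from the elliptic-point count: $\gamma_1$ yields $(p-1)/5$ free orbits (and $2$ fixed points) when $p \equiv 1 \pmod 5$, but $(p+1)/5$ free orbits (and no fixed points) when $p \equiv 4 \pmod 5$, producing the $-1$ vs $+1$ discrepancy. The only genuinely non-computational step, and hence the main obstacle, is establishing that the image of $\overline{\Gamma_{5,\infty,\infty}}$ in $\PSL_2(\FF_p)$ acts transitively on $\PP^1(\FF_p)$, so that the degree of the cover is really $p+1$; once this is granted, the remaining ramification analysis and degree count are routine.
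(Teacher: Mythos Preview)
Your proposal is correct and follows essentially the same two-step strategy as the paper: compute $g$ and $\deg d(t)$ separately as explicit functions of $p$ in each residue class, then compare. For the degree side, the paper also reduces to a multinomial computation (it simplifies $f$ to $x^5-5x^3+5x+t$ and optimizes $d = 4a+2b - (p-1)/2$ subject to $5a+3b\le r$ by linear programming); your binomial-in-$t$ reorganization via $k_{\min}(r)$ is a cosmetic variant of the same count, and the non-vanishing checks modulo $p$ are the same ones the paper flags. For the genus side, the paper simply quotes the closed formula $g = \tfrac{2}{5}(p+1-m)-1$ from \cite{takei2015}, whereas you rederive it by Riemann--Hurwitz; your ramification analysis (two cusps each with widths $1,p$; the order-$5$ elliptic element with $2$ or $0$ fixed points according to $p\equiv 1$ or $4\bmod 5$) reproduces exactly that formula, and the transitivity input you isolate is indeed what underlies the cited result. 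So the two arguments differ only in packaging, not in substance.
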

\begin{proof}
    Since $p$ is assumed to be split, Proposition 4.5 in \cite{takei2015} implies that the genus of $X_{5, \infty, \infty}^{(0)}(\fp)$ is given by
    \[
        g = 2n - 1,
    \]
    where
    \[
        p + 1 = 5n + m
    \]
    with
    \[
        m = \left\{
            \begin{array}{ll}
                0, & \textrm{if } p \equiv -1 \pmod{5} \\
                2, & \textrm{if } p \equiv 1 \pmod{5}.
            \end{array}
        \right.
    \]

    Thus,
    \[
        g = \frac{2}{5} (p+1-m) - 1.
    \]

    It follows from Theorem \ref{thm:cm_matrix_c-} that the Cartier-Manin matrix is given by $\left( \begin{smallmatrix} * & 0 \\ 0 & * \end{smallmatrix} \right)$ , it suffices to compute the degree of the entries of the main diagonal, which is done in the next lemma.
\end{proof}

\begin{lemma}
    Let $p$ be as in the statement of the previous proposition and
    \[
        \begin{pmatrix}
            a(t) & 0 \\
            0 & b(t)
        \end{pmatrix}
    \]
    be the Cartier-Manin matrix of $C^-$ with respect to $p$. Then
    \[
        \deg(a(t)) = \left\{
                        \begin{array}{ll}
                            \frac{3}{2}k, & \textrm{if } p = 5k + 1 \\
                            \frac{3}{2}k - 1, & \textrm{if } p = 5k - 1 \\
                        \end{array}
                    \right.
    \]
    and
    \[
        \deg(b(t)) = \left\{
                        \begin{array}{ll}
                            \frac{1}{2}k, & \textrm{if } p = 5k + 1 \\
                            \frac{1}{2}k - 1, & \textrm{if } p = 5k - 1 \\
                        \end{array}
                    \right.
    \]
\end{lemma}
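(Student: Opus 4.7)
The plan is to compute $a(t)$ and $b(t)$ directly from Proposition \ref{prop:explicit_cm_matrix}, which identifies (up to the harmless $(1/p)$-twist) the diagonal entries of the Cartier--Manin matrix with the coefficients $c_{p-1}$ and $c_{2p-2}$ of $f(x)^{(p-1)/2} = \sum c_r x^r$, where $f(x) = x^5 - 5x^3 + 5x + 2 - 4t$. Writing $f = g - 4t$ with $g(x) = x^5 - 5x^3 + 5x + 2$ and expanding,
$$f(x)^{(p-1)/2} \;=\; \sum_{j=0}^{(p-1)/2}\binom{(p-1)/2}{j}(-4t)^j\,g(x)^{(p-1)/2-j},$$
the degree of $c_r$ in $t$ is the largest $j$ for which $[x^r]\,g(x)^{(p-1)/2-j}$ is nonzero mod $p$, since the prefactor $\binom{(p-1)/2}{j}(-4)^j$ is automatically nonzero mod $p$ when $0 \le j < p$ (using $p > 5$).

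Because $\deg g = 5$, the coefficient $[x^r]\,g^{(p-1)/2-j}$ vanishes unless $(p-1)/2 - j \ge r/5$. For $p = 5k+1$ (so $k$ is even) and $r \in \{p-1,\,2p-2\} = \{5k,\,10k\}$, the extremal $j$-values $3k/2$ and $k/2$ pick out exactly the leading coefficient of $g^k$ and $g^{2k}$, each equal to $1$. This immediately gives $\deg a = 3k/2$ and $\deg b = k/2$.

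The substantive case is $p = 5k - 1$ ($k$ even), where $r = 5k-2$ and $r = 10k-4$ are not divisible by $5$, so the extremal $j$-values $3k/2 - 1$ and $k/2 - 1$ force us to read off non-leading coefficients of $g^k$ and $g^{2k}$. A brief multinomial computation shows that $[x^{5k-2}]\,g^k$ comes from the single term $(x^5)^{k-1}(-5x^3)$, contributing $-5k$; and $[x^{10k-4}]\,g^{2k}$ comes from exactly two terms, $(x^5)^{2k-1}(5x)$ and $(x^5)^{2k-2}(-5x^3)^2$, summing to $10k + 25k(2k-1) = 5k(10k-3)$. The identity $5k \equiv 1 \pmod p$ (equivalent to $p = 5k-1$) reduces both quantities to $-1 \pmod p$, which is nonzero, hence $\deg a = 3k/2 - 1$ and $\deg b = k/2 - 1$.

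The main obstacle is the $p \equiv 4 \pmod 5$ case: once the degree bound is written down, the real work lies in the multinomial bookkeeping and in verifying, via $5k \equiv 1 \pmod p$, that the leading $t$-coefficients of $c_{p-1}$ and $c_{2p-2}$ do not vanish. The $p \equiv 1 \pmod 5$ case is essentially immediate from the monicness of $g$.
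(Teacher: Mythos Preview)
Your proof is correct and follows essentially the same strategy as the paper: use Proposition~\ref{prop:explicit_cm_matrix} to identify $a(t)=c_{p-1}$ and $b(t)=c_{2p-2}$, expand $f^{(p-1)/2}$, locate the top $t$-degree, and check its coefficient is a $p$-adic unit. The organization differs slightly. The paper replaces $2-4t$ by $t$, applies the full four-variable multinomial expansion, and finds the maximal $d$ by linear programming over the region $\{5a+3b\le p-1,\ a,b\ge 0\}$; your decomposition $f=g-4t$ pulls the $t$-variable out first via a single binomial sum, so the $t$-degree is literally the index $j$ and the problem reduces to reading off one coefficient of $g^{(p-1)/2-j}$. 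This sidesteps the linear-programming step and makes the ``maximum attained twice'' phenomenon in the case $p=5k-1$, $r=2p-2$ completely explicit (your two multinomial contributions $10k$ and $25k(2k-1)$), whereas the paper only remarks that a short computation shows the sum is nonzero mod~$p$. Both arguments ultimately hinge on the same arithmetic identity $5k\equiv 1\pmod p$ to witness nonvanishing.
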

\begin{proof}
    By Proposition \ref{prop:explicit_cm_matrix}, $a(t)$ is the $(p-1)$-th coefficient of $f(x)^{(p-1)/2}$, where
    \[
        f(x) = x^5 - 5x^3 + 5x + 2 - 4t.
    \]
    Since this lemma is only concerned about the degree (with respect to $t$) of a certain coefficient, $f$ can be assumed to be
    \[
        f(x) = x^5 - 5x^3 + 5x + t.
    \]

    By the Multinomial Theorem,
    \[
        f(x)^{(p-1)/2} = \mysum_{a,b,c,d} (a,b,c,d)! \ (-1)^b \ 5^{b+c} \ x^{5a + 3b + c} \ t^d,
    \]
    where the sum is taken over all integers $a,b,c,d \geq 0$ such that $a+b+c+d = (p-1)/2$ and
    \[
        (a,b,c,d)! = \frac{((p-1)/2)!}{a! \ b! \ c! \ d!}.
    \]

    Therefore the $(p-1)$-th coefficient is given by
    \[
        a(t) = \mysum_{5a + 3b + c = p - 1} (a,b,c,d)! \ (-1)^b \ 5^{b+c} \ t^d.
    \]

    This implies that $\deg(a(t))$, at least over $\ZZ$, is given (possibly) by the largest $d$ such that
    \[
        d = 4a + 2b - \frac{(p-1)}{2}
    \]
    and
    \[
        \left\{
            \begin{array}{l}
                5a + 3b \leq p-1 \\
                a \geq 0 \ , \ b \geq 0.
            \end{array}
        \right.
    \]

    Assume now that that $p = 5k + 1$. One checks (using the graphical method of linear programming) that the solution is
    \[
        d = \frac{3}{2} k
    \]
    attained only once when
    \[
        \begin{array}{ccc}
            a = k & \textrm{ and } & b = 0.
        \end{array}
    \]

    Since this is attained only once, $\deg(a(t))$ over $\ZZ$ is actually $\frac{3}{2} k$. Using the fact that $p > 5$, it follows that the coefficient of the degree $\frac{3}{2} k$ term is not zero modulo $p$. Hence, $\deg(a(t)) = \frac{3}{2} k$ over $\FF_p$.

    A similar argument proves all the other cases. The only exception is the last case ($\deg(b(t))$ when $p = 5k - 1$), where the maximum $d$ is attained twice. But in this case a straight forward computation shows that the coefficient is still non-zero modulo $p$.
\end{proof}

\begin{rmk}
    Theorem \ref{thm:genus_and_non_ordinary_elmts} presents an interesting relation between the genus of $X_{5, \infty, \infty}^{(0)}(\fp)$ and the number of non-ordinary elements in the family $C^-$ modulo $p$ when $p$ is split. Unfortunately when $p$ is inert, the same does not hold. The example below shows that the difference between the degree of $d(t)$ and the genus of $X_{5,\infty,\infty}(\fp)$ grows with $p$ when $p$ is inert.

    It would be interesting to understand why there is this discrepancy between primes that are split and primes that are inert.
\end{rmk}

\begin{ex}
    Contrary to the split case, the difference between the genus of $X_{5, \infty, \infty}^{(0)}(\fp)$ and the degree of $d(t)$ is not $\pm 1$ when $p$ is inert. Here are the first few inert primes and their corresponding data as calculated using the computer algebra system SAGE (\cite{sage}):

    \begin{center}
        \begin{tabular}{| c | c | c | c |}
            \hline
            $p$ & genus of $X_{5, \infty, \infty}^{(0)}(\fp)$ & degree of $d(t)$ & genus - degree\\
            \hline
            7 & 13 & 2 & 11 \\
            13 & 55 & 4 & 51 \\
            17 & 99 & 6 & 93 \\
            23 & 189 & 8 & 181 \\
            37 & 511 & 14 & 497 \\
            43 & 697 & 16 & 681 \\
            47 & 837 & 18 & 819 \\
            53 & 1071 & 20 & 1051 \\
            67 & 1729 & 26 & 1703 \\
            73 & 2059 & 28 & 2031 \\
            83 & 2673 & 32 & 2641 \\
            97 & 3667 & 38 & 3629 \\
            103 & 4141 & 40 & 4101 \\
            \hline
        \end{tabular}
    \end{center}
\end{ex}

\vspace{\baselineskip}

\begin{rmk}
    Note that Theorem \ref{thm:genus_and_non_ordinary_elmts} actually describes a relation between the genus of $X_{5, \infty, \infty}^{(0)}(\fp)$ and the degree of $d(t)$, which is not exactly the same as the number of non-ordinary elements in the $C^-$ family. In our computations, summarized in the table below, the difference between the degree and the exact number of non-ordinary elements seems to be reasonably small. It would be interesting to understand how the difference grows and check whether it is bounded or not.
	\begin{center}
        \begin{longtable}{| c | c | c | c |}
            \hline
            $p$ & degree of $d(t)$ & \# of non-ordinary curves & difference \\
            \hline
			11 & 4 & 3 & 1\\
            19 & 6 & 5 & 1\\
            29 & 10 & 10 & 0\\
            31 & 12 & 11 & 1\\
            41 & 16 & 16 & 0\\
            59 & 22 & 21 & 1\\
            61 & 24 & 22 & 2\\
            71 & 28 & 25 & 3\\
            79 & 30 & 27 & 3\\
            89 & 34 & 32 & 2\\
            101 & 40 & 38 & 2\\
            109 & 42 & 42 & 0\\
            131 & 52 & 45 & 7\\
            139 & 54 & 53 & 1\\
            149 & 58 & 54 & 4\\
            151 & 60 & 57 & 3\\
            179 & 70 & 69 & 1\\
            181 & 72 & 68 & 4\\
            191 & 76 & 75 & 1\\
            199 & 78 & 75 & 3\\
            211 & 84 & 79 & 5\\
            229 & 90 & 90 & 0\\
            239 & 94 & 91 & 3\\
            241 & 96 & 92 & 4\\
            251 & 100 & 95 & 5\\
            269 & 106 & 106 & 0\\
            271 & 108 & 105 & 3\\
            281 & 112 & 110 & 2\\
            311 & 124 & 123 & 1\\
            331 & 132 & 129 & 3\\
            349 & 138 & 134 & 4\\
            359 & 142 & 139 & 3\\
            379 & 150 & 147 & 3\\
            389 & 154 & 154 & 0\\
            401 & 160 & 158 & 2\\
            409 & 162 & 156 & 6\\
            419 & 166 & 165 & 1\\
            421 & 168 & 162 & 6\\
            431 & 172 & 165 & 7\\
            439 & 174 & 169 & 5\\
            \hline
        \end{longtable}
    \end{center}
\end{rmk}

\bibliography{takei}

\newcommand{\etalchar}[1]{$^{#1}$}
\begin{thebibliography}{TTV91}

\bibitem[Dar00]{dar2000}
Henri Darmon.
\newblock Rigid local systems, {H}ilbert modular forms, and {F}ermat's last
  theorem.
\newblock {\em Duke Math. J.}, 102(3):413--449, 2000.

\bibitem[Dar04]{darmon2004}
Henri Darmon.
\newblock A fourteenth lecture on {F}ermat's last theorem.
\newblock In {\em Number theory}, volume~36 of {\em CRM Proc. Lecture Notes},
  pages 103--115. Amer. Math. Soc., Providence, RI, 2004.

\bibitem[DR73]{deligne&rapoport1973}
P.~Deligne and M.~Rapoport.
\newblock Les sch\'emas de modules de courbes elliptiques.
\newblock In {\em Modular functions of one variable, {II} ({P}roc. {I}nternat.
  {S}ummer {S}chool, {U}niv. {A}ntwerp, {A}ntwerp, 1972)}, pages 143--316.
  Lecture Notes in Math., Vol. 349. Springer, Berlin, 1973.

\bibitem[FT93]{frohlich&taylor1993}
A.~Fr{\"o}hlich and M.~J. Taylor.
\newblock {\em Algebraic number theory}, volume~27 of {\em Cambridge Studies in
  Advanced Mathematics}.
\newblock Cambridge University Press, Cambridge, 1993.

\bibitem[GO00]{goren&oort2000}
E.~Z. Goren and F.~Oort.
\newblock Stratifications of {H}ilbert modular varieties.
\newblock {\em J. Algebraic Geom.}, 9(1):111--154, 2000.

\bibitem[Gor02]{goren2002}
Eyal~Z. Goren.
\newblock {\em Lectures on {H}ilbert modular varieties and modular forms},
  volume~14 of {\em CRM Monograph Series}.
\newblock American Mathematical Society, Providence, RI, 2002.
\newblock With the assistance of Marc-Hubert Nicole.

\bibitem[Man63]{manin1963}
Yu.~I. Manin.
\newblock The theory of commutative formal groups over fields of finite
  characteristic.
\newblock {\em Russian Math. Surveys}, 18:1--83, 1963.

\bibitem[Nyg81]{nygaard1981}
Niels~O. Nygaard.
\newblock Slopes of powers of {F}robenius on crystalline cohomology.
\newblock {\em Ann. Sci. \'Ecole Norm. Sup. (4)}, 14(4):369--401 (1982), 1981.

\bibitem[S{\etalchar{+}}12]{sage}
W.\thinspace{}A. Stein et~al.
\newblock {\em {S}age {M}athematics {S}oftware ({V}ersion 4.8)}.
\newblock The Sage Development Team, 2012.
\newblock {\tt http://www.sagemath.org}.

\bibitem[Ser58]{serre1958}
Jean-Pierre Serre.
\newblock Sur la topologie des vari\'et\'es alg\'ebriques en caract\'eristique
  {$p$}.
\newblock In {\em Symposium internacional de topolog\'\i a algebraica
  {I}nternational symposium on algebraic topology}, pages 24--53. Universidad
  Nacional Aut\'onoma de M\'exico and UNESCO, Mexico City, 1958.

\bibitem[Tak]{takei2015}
Luiz~Kazuo Takei.
\newblock Congruence covers of triangular modular curves and their galois
  groups.
\newblock {\em (preprint)}.
\newblock http://arxiv.org/abs/1503.00557.

\bibitem[Tak12]{takei2012}
Luiz~Kazuo Takei.
\newblock On triangle groups and representations of {${\rm PSL}_2(\Bbb
  F_{p^{2n+1}})$}.
\newblock {\em Ann. Sci. Math. Qu\'ebec}, 36(1):245--258 (2013), 2012.

\bibitem[TTV91]{ttv1991}
Walter Tautz, Jaap Top, and Alain Verberkmoes.
\newblock Explicit hyperelliptic curves with real multiplication and
  permutation polynomials.
\newblock {\em Canad. J. Math.}, 43(5):1055--1064, 1991.

\bibitem[Yui78]{yui1978}
Noriko Yui.
\newblock On the {J}acobian varieties of hyperelliptic curves over fields of
  characteristic {$p>2$}.
\newblock {\em J. Algebra}, 52(2):378--410, 1978.

\bibitem[Zhu00]{zhu2000}
Hui~June Zhu.
\newblock Group structures of elementary supersingular abelian varieties over
  finite fields.
\newblock {\em J. Number Theory}, 81(2):292--309, 2000.

\end{thebibliography}
\bibliographystyle{alpha}

\end{document}